\newtheorem{mythm}{Theorem}[section]
\newtheorem{mylem}[mythm]{Lemma}
\newtheorem{myrem}[mythm]{Remark}}
\def\R{\mathbb R}
\def\Z{\mathbb Z}
\def\N{\mathbb N}
\def\C{\mathscr C}
\def\B{\mathscr B}
\def\F{\mathscr F}
\def\d{\text{\rm{d}}}
\def\E{\mathbb E}
\def\p{\mathbb P}
\def\e{\text{\rm{e}}}
\def\la{\langle}
\def\raa{\rangle}
\def\La{\Lambda}
\def\veps{\varepsilon}
\def\S{\mathcal S}
\def\C{\mathscr C}
\def\pb{\mathscr{P}}
\def\wt{\widetilde}
\def\var{\mathrm{var}}
\def\W{\mathbb{W}}
\def\law{\mathcal{L}}
\newcommand{\fin}{\hfill $\square$\par}
\newenvironment{proof}{{\noindent\it Proof.}\ }{\hfill $\square$\par}
\numberwithin{equation}{section}
\begin{document}
	
	\title{Conditional McKean-Vlasov SDEs with jumps and Markovian regime-switching: wellposedness, propagation of chaos, averaging principle \footnote{Supported in part by National Key R\&D Program of China (No. 2022YFA1000033) and NNSFs of China (No. 12271397,  11831014)}}
	
	\author{Jinghai Shao${}^a$\thanks{a: Center for Applied Mathematics, Tianjin University, Tianjin, China.   }  \and Taoran  Tian${}^a$  \and Shen Wang${}^b$\thanks{b: College of Science, Civil Aviation University of China, Tianjin, China}
	}
	
\maketitle
	
\begin{abstract}
The conditional McKean-Vlasov SDEs with jumps and Markovian regime-switching are investigated in this work. We propose a new metric on the c\`adl\`ag path space to overcome the difficulty caused by the jumps  in order to establish the strong wellposedness of conditional McKean-Vlasov SDEs using $L^2$-Wasserstein distance. Also, we establish the propagation of chaos for the associated mean-field interaction particle system with common noise and provide an explicit bound on the convergence rate. Furthermore, an averaging principle is established for   two time-scale conditional McKean-Vlasov equations, where much attention is paid to the convergence of the conditional distribution term.
\end{abstract}
	
\section{Introduction}
Conditional McKean-Vlasov SDEs can be used to describe the asymptotic behavior for a class of $N$-particle systems with mean-field interaction in a common environment characterized by a regime-switching process. The coefficients of these type of SDEs depend on the solution of the equation and the conditional law of this solution with regime-switching. The mean-field interactions can be expressed by the coefficients depending on the empirical measure of the $N$-particle system, which converges to the conditional law of any particle of the limit system as $N$ goes to infinity. There is an extensive list of references to such works, see \cite{CDL} for mean-field games with common noise, \cite{Szn} for a nice introduction to the topic of propagation of chaos, \cite{Gar,Oel} for the study of the McKean-Vlasov limit using martingale method, \cite{DZ} for some limit theory for jump mean-field models, \cite{APF,Gra,GM} for the study of jump diffusion models and the references therein.  Mean-field models have arisen in various of application areas, see \cite{BFFT} for neural network modeling, \cite{BFY} for mean-field games and
	 mean-field type control theory, \cite{CGM} for mean-field theory in social sciences and \cite{FL} for an application in portfolio modeling. The purpose of this work is to study the existence and uniqueness of solutions to conditional McKean-Vlasov (MKV) stochastic differential equations (SDEs) driven by L\'evy processes with Markovian regime-switching, and the conditional propagation of chaos for an associated mean-field interaction particle system with common noise. Moreover, we shall investigate the averaging principle for two time-scale MKV SDEs with Markovian regime-switching.

	 Precisely, let $(\Omega^0,\F^0,\F_t^0,\p^0)$ and $(\Omega^1,\F^1,\F_t^1,\p^1)$ be two filtered probability spaces satisfying $\F_{t}^k=\F_{t+}^k:=\bigcap_{s\geq t} \F_{s}^k$, $k=0,1$. Define a product space
	 \[\Omega=\Omega^0\times\Omega^1,\quad \p=\p^0\times\p^1,\]
	 and $\F$ stands for the completion of $\F^0\times\F^1$, $\F_t$ the completion of $\F_t^0\times\F_t^1$ for $t\geq 0$. Put
	 \[\mathbb{F}=(\F_t)_{t\geq 0},\quad \mathbb{F}^0=(\F_t^0)_{t\geq 0}, \quad \mathbb{F}^1=(\F_t^1)_{t\geq 0}.
	 \]
	 Element $\omega=(\omega^0,\omega^1)\in \Omega^0\times\Omega^1$ denotes a generic element in $\Omega$. $\E^0$ and $\E^1$ stands for taking expectation w.r.t.\! $\p^0$ and $\p^1$ respectively. Given a random variable $X$, denote by $\law(X)$  the law of $X$, and  by $\law(X|\hat \F)$ the conditional law of $X$ given a $\sigma$-algebra $\hat \F$.
	 Consider the following conditional MKV SDE:
	 \begin{equation}\label{a-1}
	 	\begin{split}
	 		\d Y_t&= b(Y_t,\law(Y_t|\F_{t}^{0}), \La_t)\d t+\sigma(Y_t,\law(Y_t|\F_{t}^{0}),\La_t)\d W_t \\
	 		&\qquad +\int_{\R^d_0}\!g(Y_{t-},\law(Y_{t-}|\F_{t-}^{0}), \La_{t-},z)\wt{\mathcal{N}}(\d t,\d z),
	 	\end{split}
	 \end{equation}
	 where $b:\R^d\times\pb(\R^d)\times\S\to \R^d$, $\sigma:\R^d\times\pb(\R^d)\times\S\to \R^{d\times d}$, $g:\R^d\times\pb(\R^d)\times \S\times \R^d\to \R^d$ satisfying conditions specified later, $\F_{t-}^0:=\bigcup_{s< t} \F_{s}^0$. $\pb(\R^d)$ denotes the space of all probability measures over $\R^d$. $\S=\{1,2,\ldots,m_0\}$ and  $m_0 $ could equal $\infty$, which means that $\S$ is an infinitely countable state space when $m_0=\infty$. $(W_t)$ is $d$-dimensional Brownian motion, $\wt{\mathcal{N}}(\d t,\d z)=\mathcal{N}(\d t,\d z)-\lambda(\d z)\d t$ is a compensated Poisson random measure with intensity $\lambda(\d z)\d t$ satisfying $\int_{\R^d_0}1\!\wedge |z|^2\lambda(\d z)<\infty$ and $\R^d_0:=\R^d\backslash\{0\}$. $(\La_t)$ is a continuous-time Markov chain on $\S$ with transition rate matrix $(q_{ij})_{i,j\in\S}$. Throughout this work, suppose that $(W_t)$, $(\wt{\mathcal{N}}(\d t,\d z))$ and $(\La_t)$ are mutually independent, and further that Brownian motions and Poisson random measures are all defined on the probability space $(\Omega^1,\F^1,\p^1)$ and Markov chains are defined on $(\Omega^0,\F^0,\p^0)$ in order to simplify the treatment  of conditional distribution in  SDE \eqref{a-1} adopting the basic setting of Carmona and Delarue \cite{Car} in the study of MKV SDEs with common noise.

	 In this paper we shall study the strong wellposedness and the conditional  propagation of chaos for the conditional MKV SDE \eqref{a-1} and the associated mean-field interaction particle system with a common noise characterized by a Markovian regime-switching process. These questions for MKV SDEs driven by L\'evy process without regime-switching  have been studied by \cite{Gra} and the recent work \cite{Er}. Also, notice that in \cite{Gra,Er} the distribution instead of the conditional distribution as in \eqref{a-1} was used. The wellposedness of \eqref{a-1} without the term $\wt{\mathcal{N}}(\d t,\d z)$ has been studied in \cite{NYH}, which generalized the result in \cite{Oel} without regime-switching. \cite{NYH} also showed the law of large number of a mean-field interaction system with Markovian regime-switching, which deduces the conditional distribution $\law(Y_t|\F_t^0)$ instead of the distribution $\law(Y_t)$ to be used in the presence of regime-switching. Later, \cite{SW21} generalized the work \cite{NYH} to conditional MKV SDEs driven by Brownian motion with state-dependent regime-switching. Comprehensive study on hybrid switching diffusion processes can be found in \cite{YZ} and the references therein. The weak convergence or $L^1$-convergence of the particle system to its limit has been established in \cite{Er,Gra,NYH}, whereas there is no estimate on the convergence rate. In this work, we shall establish the $L^2$-convergence and provide an  estimate of the convergence rate.
	
	 In the process of solving the previous two questions, we are interested in overcoming a technical challenge: on $\pb(\R^d)$,
	 use the $L^2$-Wasserstein distance to measure the regularity of coefficients such as $\mu\mapsto b(x,\mu,i)$ rather than using the bounded Lipschitz distance as in \cite{NYH}, or using the $L^1$-Wasserstein distance as in \cite{Er,Gra}. We prefer using $L^2$-Wasserstein distance for two reasons: 1) the Lipschitz continuous condition on coefficients $b$, $\sigma$, $g$ in terms of $L^2$-Wasserstein distance is weaker than  those in terms of bounded Lipschitz distance or $L^1$-Wasserstein distance (see the details in Section 2). 2) The probability space $\pb(\R^d)$ endowed with $L^2$-Wasserstein distance owns a rich geometric structure, and many known results for MKV SDEs driven by Brownian motions are presented using $L^2$-Wasserstein distance (cf. \cite{Car} and references therein).
	
	 Our second purpose is to study the averaging principle for two time-scale conditional MKV SDEs with regime-switching. We study the the averaging principle in the $L^2$-convergence sense, therefore, more restrictions on the coefficients $\sigma$ and $g$ are needed. A little precisely, for $\veps>0$, in this part we consider the two time-scale system $(Y_t^\veps,\La_t^\veps)$ satisfying
\begin{equation}\label{a-2}
 \begin{split}
	 		\d Y_t^\veps&= b(Y_t^\veps,\law(Y_t^\veps|\F_{t}^{0}), \La_t^\veps)\d t+\sigma(Y_t^\veps,\law(Y_t^\veps|\F_{t}^{0}))\d W_t  +\int_{\R^d_0}\!g( z)\wt{\mathcal{N}}(\d t,\d z),
	 	\end{split}
	 \end{equation}
where $(\La_t^\veps)$ is a Markov chain on $\S$ with transition rate matrix $\frac1\veps(q_{ij})_{i,j\in\S}$. In this two time-scale system,  $(Y_t^\veps)$ is the slow component, and $(\La_t^\veps)$ is the fast component. Under certain ergodic condition for the semigroup $P_t$ associated with $Q$-matrix $(q_{ij})_{i,j\in\S}$, we shall show that $(Y_t^\veps)$ will converge in $L^2$-norm to a limit process, which is a solution to MKV SDE with the coefficients to be the average of $b$, w.r.t.\! the invariant probability measure of $P_t$. Especially, the conditional distribution  $\law(Y_t^\veps|\F_t^0)$ will change to the distribution $\law(\bar{Y}_t)$; see the details in Section 3. During this procedure, as a characteristic of conditional MKV SDE, much attention is paid to determining the limit of the conditional distribution  $\law(Y_t^\veps|\F_t^{0})$ as $\veps\to 0$. When $\S$ is a finite state space, if the random switching process itself owns two time-scale too, i.e. $Q^\veps=\frac1{\veps} \tilde Q+\hat{Q}$, \cite{NYH} showed the weak convergence of the mean-field interaction particle system.
	
	 The rest of this work is organized as follows. In Section 2, we use the decoupling method and fixed point theorem to establish the wellposedness of conditional MKV SDE \eqref{a-1}. In Section 3, an averaging principle for a two time-scale conditional MKV SDE with regime-switching. In particular, we can deal with the switching process in an infinitely countable state space under certain ergodic condition on the Markov chain $(q_{ij})_{i,j\in\S}$. Section 4 is devoted to the study of conditional propagation of chaos for the mean-field interaction particle system associated with \eqref{a-1}. The strong convergence rate is explicitly estimated.

	 \section{Wellposedness of conditional MKV equation with regime-switching}
	
	 We begin with introducing some notation used in this work. Given a metric space $(E,d)$, denote by $\pb(E)$ the space of all probability measures on $(E,\B(E))$. For $p\geq 1$, put
	 \[\pb_p(E)=\big\{\mu\in \pb(E); \int_{E}\!d(x_0,x)^p\mu(\d x)<\infty\big\}\]
	 for some $x_0\in E$. For any two $\mu,\nu\in \pb(E)$, the $L^p$-Wasserstein distance between them is defined by
	 \begin{equation}\label{c-1}
	 	\W_p(\mu,\nu)=\inf\Big\{\int_{E\!\times\!E}\!d(x,y)^p\Gamma(\d x,\d y);\ \Gamma\in \C(\mu,\nu)\Big\}^{1/p},
	 \end{equation}
	 where $\C(\mu,\nu)$ denotes the set of probability measures over $E\!\times \!E$ with marginals $\mu$ and $\nu$. From this definition, it is easy to see  that
	 \[ \W_p(\mu,\nu)\leq \W_q(\mu,\nu),\quad \text{if $1\leq p\leq q$.}\]
	 The total variation distance between $\mu$ and $\nu$ is defined by
	 \[\|\mu-\nu\|_\var=2\sup\big\{|\mu(A)\!-\!\nu(A)|; A\in \! \B(E)\big\}=\sup\Big\{\big|\!\int_E\! f\d \mu\!-\!\int_E\! f\d \nu\big|; |f|\leq 1\Big\}.
	 \]

	 The following conditions on the coefficients will be used in this work.
	 \begin{itemize}
	 	\item[$\mathrm{(A1)}$] There exists $K_1>0$ such that for any $x,y\in \R^d$, $\mu,\nu\in \pb(\R^d)$, $i\in\S$,
	 	\begin{align*}
	 		&|b(x,\mu,i)-b(y,\nu,i)|^2+\|\sigma(x,\mu,i)-\sigma(y,\nu,i)\|^2 \\ &\ +\int_{\R^d_0}\! |g(x,\mu,i,z)-g(y,\nu,i,z)|^2\lambda(\d z)\leq K_1\big(|x-y|^2+\W_2^2(\mu,\nu)\big).
	 	\end{align*}
	 	\item[$\mathrm{(A2)}$] There exists $K_2>0$ such that for any $x\in \R^d$, $\mu\in \pb(\R^d)$, $i\in\S$,
	 	\begin{align*}
	 		&|b(x,\mu,i)|^2\!+\!\|\sigma(x,\mu,i)\|^2 \!+\! \int_{\R^d_0}\!|g(x,\mu, i,z)|^2\lambda(\d z)\leq K_2\big(1\!+\!|x|^2\!+\!\int_{\R^d_0}\!\!|y|^2\mu(\d y)\big).
	 	\end{align*}
	 	\item[$\mathrm{(A3)}$] Let $(q_{ij})_{i,j\in \S}$ be a conservative, irreducible transition rate matrix. When $\S=\{1,2,\ldots\}$ is infinitely countable, assume that there exist a function $V:\S\to [1,\infty)$ and a constant $\kappa_0\in \R$ such that
	 	\[\lim_{i\to \infty} V_i=\infty,\quad \sum_{j\in\S} q_{ij}V_j\leq \kappa_0V_i,\quad \forall \,i\in\S.
	 	\]
	 	\item[$\mathrm{(A4)}$] Let $P_t$ be the semigroup associated with $(q_{ij})_{i,j\in\S}$. Assume $P_t$ is ergodic and $\pi=(\pi_i)_{i\in\S}$ denotes its unique invariant probability measure. Suppose that there exist    constants   $\eta,\beta >0$
such that
\begin{equation}\label{c-2}
\begin{split}
 \|P_t(i,\cdot)\!-\!\pi\|_{\var}&\leq  \eta\, \e^{-\beta t},\ \quad \forall\,i\in\S,
 \end{split}
\end{equation}
	 \end{itemize}
	
	 \begin{myrem}\label{rem-1}
	 	Conditions (A1) and (A2) are used to ensure the wellposedness of MKV equations. In (A3), the existence of function $V$ is used to ensure the existence and uniqueness of the $Q$-process $P_t$ associated with $(q_{ij})$; see, for instance, \cite[Theorem 2.25]{Chen}. (A4) is used in the study of averaging principle for the interaction particle systems or conditional MKV SDEs in order to deal with the case $\S$ is infinitely countable. When $\S$ is a finite state space, it is well known that the semigroup $P_t$ satisfies condition \eqref{c-2}.
	 \end{myrem}
	
	
	 On the probability space $(\Omega, \F,\p)$ introduced above, let $(W_t)$ be $d$-dimensional standard Brownian motion, and $\wt{\mathcal{N}}(\d t,\d z)$ be a compensated Poisson random measure with intensity $\lambda(\d z)\d t$ satisfying $\int_{\R^d_0}1\wedge |z|^2\lambda(\d z)<\infty$ defined on $(\Omega^1,\F^1,\p^1)$. Assume as usual $(W_t)$ and $(\wt{\mathcal{N}}(\d z,\d t))$ are mutually independent. Let $(\La_t)$ be a continuous-time Markov chain with $Q$-matrix $(q_{ij})$ defined on $(\Omega^0,\F^0,\p^0)$.
	
	 Consider the following conditional MKV SDE with Markovian regime-switching:
	 \begin{equation}\label{c-3}
	 	\begin{split}
	 		\d Y_t&=b(Y_t,\law(Y_t|\F^0_t),\La_t)\d t+\sigma(Y_t,\law(Y_t|\F_t^0),\La_t)\d W_t\\
	 		&\qquad \qquad+\!\int_{\R^d_0}\!\! g( Y_{t-},\law(Y_{t-}|\F_{t-}^0), \La_{t-}, z)\wt{\mathcal{N}}(\d t,\d z),
	 	\end{split}
	 \end{equation}
	 with initial value $Y_0\in \F_0$ satisfying $\E|Y_0|^2<\infty$ and $\La_0=i_0\in \S$.

\begin{mythm}[Strong uniqueness]\label{thm-2.1}
	 	Under conditions (A1)-(A3), the strong uniqueness holds for the MKV SDE \eqref{c-3}, that is, for any two solutions $(Y_t)$ and $(\wt Y_t)$ to \eqref{a-1}, if $Y_0=\wt Y_0$ a.e. with $\E|Y_0|^2<\infty$, then for any $T>0$
	 	\[Y_t=\wt Y_t,\quad \forall \, t\in [0,T],\  \text{a.e.}.\]
	 \end{mythm}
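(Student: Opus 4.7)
\textbf{Proof plan for Theorem \ref{thm-2.1}.} The strategy is a coupling-plus-Gr\"onwall argument on $\E|Y_t-\wt Y_t|^2$. Let $(Y_t)$ and $(\wt Y_t)$ be two solutions of \eqref{c-3} driven by the \emph{same} $W$, $\wt{\mathcal{N}}$ and $\La$ on the product space $(\Omega,\F,\p)$, with $Y_0=\wt Y_0$ a.e. Since the idiosyncratic drivers $(W,\wt{\mathcal{N}})$ and the common driver $(\La)$ are shared, the two solutions provide a natural coupling of the two conditional laws given $\F_t^0$.

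The first step is to derive an Itô formula for $|Y_t-\wt Y_t|^2$. Using the standard Itô formula for SDEs driven by compensated Poisson random measures gives
\begin{equation*}
\begin{split}
\d|Y_t-\wt Y_t|^2 &= 2\la Y_t-\wt Y_t, b(Y_t,\law(Y_t|\F_t^0),\La_t)-b(\wt Y_t,\law(\wt Y_t|\F_t^0),\La_t)\raa\,\d t \\
&\quad + \|\sigma(Y_t,\law(Y_t|\F_t^0),\La_t)-\sigma(\wt Y_t,\law(\wt Y_t|\F_t^0),\La_t)\|^2\,\d t \\
&\quad + \int_{\R^d_0}\!|g(Y_{t-},\law(Y_{t-}|\F_{t-}^0),\La_{t-},z)-g(\wt Y_{t-},\law(\wt Y_{t-}|\F_{t-}^0),\La_{t-},z)|^2\lambda(\d z)\,\d t \\
&\quad + \d M_t,
\end{split}
\end{equation*}
where $M_t$ is a local martingale coming from the Brownian and compensated Poisson contributions. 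A standard localization (stopping times $\tau_n=\inf\{t:|Y_t|+|\wt Y_t|\geq n\}$) together with the moment bound implicit in (A2) lets us kill the martingale upon taking expectations. Applying Young's inequality to the drift term and then invoking (A1), we obtain for some constant $C=C(K_1)$
\begin{equation*}
\E|Y_t\wedge\tau_n-\wt Y_{t\wedge\tau_n}|^2 \leq C\int_0^t \E\bigl(|Y_s-\wt Y_s|^2+\W_2^2(\law(Y_s|\F_s^0),\law(\wt Y_s|\F_s^0))\bigr)\d s.
\end{equation*}

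The crucial step is to control the conditional Wasserstein term. Because $W$, $\wt{\mathcal{N}}$ live on $(\Omega^1,\F^1,\p^1)$ and $\La$ lives on $(\Omega^0,\F^0,\p^0)$, for $\p^0$-a.e.\ fixed $\omega^0$ the pair $(Y_t(\omega^0,\cdot),\wt Y_t(\omega^0,\cdot))$ on $(\Omega^1,\p^1)$ is a coupling of $\law(Y_t|\F_t^0)(\omega^0)$ and $\law(\wt Y_t|\F_t^0)(\omega^0)$. By the definition of $\W_2$ in \eqref{c-1},
\begin{equation*}
\W_2^2(\law(Y_t|\F_t^0),\law(\wt Y_t|\F_t^0)) \leq \E^1\bigl[|Y_t-\wt Y_t|^2\bigr],\qquad \p^0\text{-a.s.},
\end{equation*}
so that $\E\,\W_2^2(\law(Y_t|\F_t^0),\law(\wt Y_t|\F_t^0))\leq \E|Y_t-\wt Y_t|^2$ by Fubini. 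Substituting back and letting $n\to\infty$ (justified by (A2) plus Fatou) yields $\E|Y_t-\wt Y_t|^2 \leq 2C\int_0^t \E|Y_s-\wt Y_s|^2\d s$. Gr\"onwall's inequality then forces $\E|Y_t-\wt Y_t|^2=0$ for every $t\in[0,T]$, and a standard c\`adl\`ag modification argument upgrades this to $Y_t=\wt Y_t$ for all $t\in[0,T]$ a.s.

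The main technical hurdle is the coupling bound on the conditional $L^2$-Wasserstein term: one must be careful that $\law(Y_t|\F_t^0)$ is realized as a regular conditional distribution on $(\Omega^1,\p^1)$ (which is exactly why the authors placed $W$ and $\wt{\mathcal{N}}$ on $\Omega^1$ and $\La$ on $\Omega^0$). Once that identification is in place, the jump contribution is handled cleanly by (A1) since the Lipschitz bound is already phrased in the $L^2(\lambda)$-norm that matches the Itô isometry for compensated Poisson integrals. The role of (A3) is indirect: it is needed to guarantee the non-explosion of $(\La_t)$ so that the SDE itself is well-posed path by path, but plays no quantitative role in the contraction estimate since both solutions share the same chain $\La$.
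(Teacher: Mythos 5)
Your proposal is correct and follows essentially the same route as the paper: both hinge on the coupling bound $\E[\W_2^2(\law(Y_t|\F_t^0),\law(\wt Y_t|\F_t^0))]\leq \E[|Y_t-\wt Y_t|^2]$ obtained from the product-space structure, followed by a Gr\"onwall estimate on the second moment of the difference. The only (cosmetic) difference is that you apply It\^o's formula with localization to $|Y_t-\wt Y_t|^2$, whereas the paper squares the three integral terms directly and controls the martingale parts with the Burkholder--Davis--Gundy inequality to bound $\E[\sup_{t\leq T}|Y_t-\wt Y_t|^2]$.
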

	
	 \begin{proof}
	 	As $Y_t$ and $\wt Y_t$ satisfy  \eqref{a-1}, we have
	 	\begin{align*}
	 		|Y_t-\wt Y_t|^2&\leq 3\Big|\int_0^t\!(b(Y_s,\law(Y_s|\F_s^0),\La_s)-b(\wt Y_s,\law(\wt Y_s|\F_s^0),\La_s)\d s\Big|^2\\
	 		&\quad +\!3\Big|\int_0^t\!(\sigma(Y_s,\law(Y_s|\F_s^0),\La_s)- \sigma(\wt Y_s,\law(\wt Y_s|\F_s^0),\La_s)\d W_s\Big|^2\\
	 		&\quad + 3\Big|\int_0^t\!\int_{\R_0^d}\!\!\big(g(Y_{s-}, \law(Y_{s-}|\F_{s-}^0),\La_{s-},z)-\! g(\wt Y_{s-},\law(\wt Y_{s-}|\F_{s-}^0),\La_{s-},z)\big) \wt{\mathcal{N}}(\d s,\d z)\Big|^2
	 	\end{align*}
	 	Notice that for all $\omega^0\in \Omega^0$, $[\law(Y_s|\F_s^0)(\omega^0)](D)=\p^1[Y_s(\omega^0,\cdot)\in D]$, $D\in \B(\R^d)$, and hence by the definition of Wasserstein distance it holds
	 	\[\E\big[\W_2(\law(Y_s|\F_s^0),\law(\wt Y_s|\F_s^0))^2\big]\leq \E\big[\E^1[ |Y_s-\wt Y_s|^2]\big]=\E\big[|Y_s-\wt Y_s|^2\big].\]
	 	Applying Burkholder-Davis-Gundy's inequality and (A1), we get
\begin{equation}\label{c-3.1}
\E\big[\sup_{0\leq t\leq T}|Y_t-\wt Y_t|^2\big]\leq C(T)\E\Big[\int_0^T|Y_s-\wt Y_s|^2\d s\Big],
\end{equation} where $C(T)=6(TK_1+2C_2K_1)$, $C_2$ is a positive constant derived from Burkholder-Davis-Gundy's inequality. Then, the desired conclusion follows immediately by Gronwall's inequality.
	 \end{proof}

	 We have proven the strong uniqueness of solutions to \eqref{a-1} in Theorem \ref{thm-2.1}, then Yamada-Watanabe principle (cf. \cite{IW,Kurtz}) allows to construct a strong solution based on a weak solution of SDE \eqref{a-1}. In order to show the existence of weak solution, Erny \cite{Er} used Picard iteration method based on a local Lipschitz condition on the coefficients without regime-switching. There, the continuity of the coefficients w.r.t. the argument $\mu\in \pb(\R^d)$ is measured using the $L^1$-Wasserstein distance $\W_1$. \cite{Oel} used the convergence of mean-field interacting particle system to construct the weak solution to MKV SDEs, whose results have been generalized in \cite{NYH} to the setting of  MKV SDEs with Markovian regime-switching. \cite{Oel} considered MKV SDEs driven by Brownian motions and Poisson random measures respectively, and \cite{NYH} only considered the case driven by Brownian motions. In contrast to \cite{Er}, \cite{Oel} and \cite{NYH} adopted the bounded Lipschitz metric $\|\,\cdot\,\|_{BL}$ to measure the regularity of the coefficients w.r.t. \!$\mu\!\in \!\pb(\R^d)$, which is stronger than using $\W_1$. Precisely,
	 \[\|\mu-\nu\|_{BL}:=\sup\Big\{ \int_{\R^d}\! f\d \mu-\int_{\R^d}\! f\d \nu;\ |f|\leq 1,\sup_{x\neq y}\frac{|f(x)-f(y)|}{|x-y|}\leq 1\Big\}.\]
	 According to Kantorovich dual representation theorem for $L^p$-Wasserstein distance (cf. \cite{Vill}),
	 \[\W_1(\mu,\nu)=\sup\Big\{\int_{\R^d}\! f\d \mu-\int_{\R^d}\! f\d \nu; \ \sup_{x\neq y}\frac{|f(x)-f(y)|}{|x-y|}\leq 1\Big\}.\]
	 This yields immediately that
	 \[\|\mu-\nu\|_{BL}\leq \W_1(\mu,\nu).\]
	 However, in the study of MKV SDEs driven by Brownian motions, a widely used distance in $\pb(\R^d)$ is $L^2$-Wasserstein distance (cf. \cite{Car} and references therein). In what follows, we aim to solve MKV SDEs driven by L\'evy processes by using $L^2$-Wasserstein distance $\W_2$ on $\pb(\R^d)$. To achieve this purpose, the price we pay is that the jump noise becomes an additive noise in order to cope with the difficulty caused by the Skorokhod topology. A little precisely, the coefficient $g$ in \eqref{a-1} before the Poisson random measure cannot depend on $Y_t$ and its distribution.
	
	 Before presenting our result, we introduce some notation.
	 Let $\mathcal{C}([0,T];\R^d)=\{f:[0,T]\to \R^d;\ \text{being continuous}\}$.  Denote by $\mathcal{D}([0,T]; \R^d )$ the space of  right-continuous functions taking values in $ \R^d $ with left limits.
	 For any two curves $ \xi,\tilde \xi\in \mathcal{D}([0,T]; \R^d)$, define their distance by
	 \begin{equation}\label{c-4}
	 	\rho_{\mathcal{D}}( \xi, \tilde \xi)=\inf_{\gamma\in \Upsilon}\Big\{\max\big\{\|\gamma\|,\sup_{t\in [0,T]} \big|\xi_t-\tilde \xi_{\gamma(t)}\big|\big\}\Big\},
	 \end{equation}
	 where $\Upsilon$ is the class of continuous, strictly increasing maps acting from $[0,T]$ onto itself, and for $\gamma \in \Upsilon$,
	 \[\|\gamma\|:=\sup_{s\neq t}\Big|\!\log \frac{\gamma(t)-\gamma(s)}{t-s} \Big|.\]
	 By virtue of \cite[Theorems 14.1, 14.2]{Bill}, $\mathcal{D}([0,T];\R^d)$ becomes a Polish space under the distance $\rho_{\mathcal{D}}$.

	 For any given $T>0$, let $\mathscr{Y}_T$ be a space consisting of all processes $X:[0,T]\times\Omega\to  \R^d $ satisfying
	 \begin{itemize}
	 	\item[$\mathrm{(i)}$] for almost all $\omega \in \Omega $, $X_\cdot(\omega ):[0,T]\to  \R^d $ is right-continuous with left limits;
	 	\item[$\mathrm{(ii)}$] the  process $t\mapsto X_t$ is $\F_t$-adapted and satisfies $\sup_{t\in [0,T]}\E[|X_t|^2]<\infty$.
	 \end{itemize}
	 Based on the metric $\rho_{\mathcal{D}}$ on $\mathcal{D}([0,T];\R^d)$,  for each $p\geq 1$,  introduce  a metric on $\mathscr{Y}_T$:
	 \begin{equation}\label{c-5}
	 	{\bm\rho}_p(X,\tilde X):=\E\big[\rho_{\mathcal{D}} (X,\tilde X)^p\big]^{1/p},\quad X,\tilde X\in \mathscr{Y}_T.
	 \end{equation}
	
	 For a given element $\ell\in \mathscr{Y}_T$, define a subspace $\mathscr{Y}_T(\ell)$ of $\mathscr{Y}_T$ as follows:
	 \begin{equation}\label{c-4.5}
	 	\mathscr{Y}_T(\ell)=\big\{X\in \mathscr{Y}_T; \ \text{such that $X_\cdot-\ell_\cdot\in \mathcal{C}([0,T];\R^d)$}\big\}.
	 \end{equation}On $\mathscr{Y}_T (\ell)$, we use the distance induced from uniform norm of $\mathcal{C}([0,T];\R^d)$, that is,
	 \begin{equation}\label{c-4.6}
	 	{\bm\rho}_{p,\infty}(X,\tilde X):=\E\big[\|X_t-\tilde X_t\|_{\infty}^p\big]^{1/p}=\E\big[\sup_{t\in[0,T]}|X_t-\tilde X_t|^p\big]^{1/p},\quad X,\,\tilde X\in \mathscr{Y}_T(\ell).
	 \end{equation}

	 \begin{mylem}\label{lem-2.1}
	 	$\mathrm{(i)}$ The metric space $(\mathscr{Y}_T,{\bm \rho}_p)$ is complete for every $p\geq 1$.
	 	
	 	$\mathrm{(ii)}$ For each given $\ell\in \mathscr{Y}_T$, the metric space $(\mathscr{Y}_T(\ell), {\bm\rho}_{Tp,\infty})$ is complete for every $p\geq 1$.
	 \end{mylem}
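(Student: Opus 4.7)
The plan is to use the standard completeness recipe for $L^1$-type spaces of random elements in a Polish path space: extract a fast Cauchy subsequence whose paths converge almost surely in the relevant topology, identify the pathwise limit, and verify it lies in $\mathscr{Y}_T$ (resp.\ $\mathscr{Y}_T(\ell)$) and that the full sequence converges.

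For part (i), take a Cauchy sequence $\{X^n\}\subset \mathscr{Y}_T$ and extract a subsequence $\{X^{n_k}\}$ with ${\bm\rho}_T(X^{n_k}, X^{n_{k+1}})\le 2^{-k}$. By monotone convergence, $\sum_k\rho_{\mathcal{D}}(X^{n_k}, X^{n_{k+1}})<\infty$ almost surely, so $\{X^{n_k}(\omega)\}$ is Cauchy in the Polish space $(\mathcal{D}([0,T];\R^d),\rho_{\mathcal{D}})$ by \cite[Theorems 14.1, 14.2]{Bill}, and hence converges to a càdlàg limit $X(\omega)$ for almost every $\omega$. I would then verify $X\in \mathscr{Y}_T$: adaptedness follows because Skorokhod convergence implies $X^{n_k}_t\to X_t$ at every continuity point of $X$, while at the at most countable set of fixed discontinuities we approximate $X_t$ from the right along continuity times and use right-continuity of $(\F_t)$; the moment bound is obtained by Fatou's lemma at continuity points and then extended to all $t$ via right-continuity of paths. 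Convergence ${\bm\rho}_T(X^n,X)\to 0$ follows by applying Fatou to ${\bm\rho}_T(X^{n_k}, X^{n_j})\to {\bm\rho}_T(X^{n_k},X)$ as $j\to\infty$, together with the Cauchy property of the full sequence.

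For part (ii), set $U^n:=X^n-\ell\in \mathcal{C}([0,T];\R^d)$; the hypothesis makes $\{U^n\}$ Cauchy in $L^1(\Omega;\mathcal{C}([0,T];\R^d))$ under the uniform norm. Extracting a fast Cauchy subsequence yields uniform a.s.\ convergence on $[0,T]$ to some $U\in \mathcal{C}([0,T];\R^d)$. I would define $X:=U+\ell$; then $X-\ell=U$ is continuous, $X$ is càdlàg because $\ell$ is, and $X_t$ is $\F_t$-adapted as an a.s.\ limit of $\F_t$-measurable random variables. The moment bound on $X$ follows from Fatou applied to $|X^{n_k}_t|^2=|U^{n_k}_t+\ell_t|^2$ together with the $L^2$ bound on $\ell$, while full convergence in ${\bm\rho}_{T,\infty}$ follows from Cauchy plus convergence of the subsequence.

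The main obstacle I anticipate is propagating the uniform moment bound $\sup_{t\in[0,T]}\E[|X_t|^2]<\infty$ to the limit in (i), because ${\bm\rho}_T$-convergence does not control $L^2$ norms at individual times and, more subtly, Skorokhod convergence of paths does not yield pointwise convergence at fixed discontinuities of $X$. My resolution is a two-step argument: first apply Fatou at those deterministic $t$ for which $X$ is almost surely continuous at $t$ (the complementary set of fixed discontinuities being at most countable), and then extend the bound to every $t\in[0,T]$ by right-continuity of $X$ together with right-continuity of the filtration.
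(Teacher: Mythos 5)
Your argument follows the paper's proof essentially step for step: extract a fast Cauchy subsequence, deduce almost sure convergence in the Polish space $(\mathcal{D}([0,T];\R^d),\rho_{\mathcal{D}})$ (the paper gets there by Chebyshev plus Borel--Cantelli where you invoke monotone convergence of $\sum_k\rho_{\mathcal D}(X^{(n_k)},X^{(n_{k+1})})$ -- an immaterial difference), prove adaptedness of the limit by combining pointwise convergence at the almost-sure continuity times of the limit (whose complement is at most countable) with right-continuity of paths and $\F_t=\F_{t+}$, and recover convergence of the full sequence via Fatou; part (ii) is treated identically in both. So on the point the paper itself calls ``the key point of this lemma'' (adaptedness of the limit), your proposal is correct and matches the paper.

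The one place you go beyond the paper is the verification that the limit satisfies $\sup_{t\in[0,T]}\E[|\tilde X_t|^2]<\infty$; the paper silently drops this requirement, asserting that adaptedness alone suffices for $\tilde X\in\mathscr{Y}_T$. You are right to flag it, but your proposed resolution does not close it: Fatou at a continuity time $t$ only gives $\E[|\tilde X_t|^2]\le\liminf_k\E[|X^{(n_k)}_t|^2]$, and a ${\bm\rho}_T$-Cauchy (or ${\bm\rho}_{T,\infty}$-Cauchy) sequence carries no uniform second-moment bound, since both metrics are of $L^1$ type while membership in $\mathscr{Y}_T$ is an $L^2$ condition. Indeed the moment bound can genuinely fail in the limit: take $Z\in\F_0$ with $\E|Z|<\infty$ but $\E|Z|^2=\infty$ and set $X^{(n)}_t\equiv Z\mathbf{1}_{\{|Z|\le n\}}$; each $X^{(n)}$ lies in $\mathscr{Y}_T$, the sequence is Cauchy for both metrics because ${\bm\rho}_T(X^{(n)},X^{(m)})\le\E\big[|Z|\mathbf{1}_{\{|Z|>n\wedge m\}}\big]\to0$, yet the only possible limit is the constant path $Z$, which violates the $L^2$ condition. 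So this step is a genuine gap in your write-up -- but it is a defect of the lemma as stated rather than of your strategy, and the standard repair is to metrize $\mathscr{Y}_T(\ell)$ by $\big(\E[\sup_{t\in[0,T]}|X_t-\tilde X_t|^2]\big)^{1/2}$ (which is in fact the quantity the paper actually estimates in the contraction argument of Theorem \ref{thm-2.2}), or to restrict to a subset with a uniform second-moment bound; with either fix your Fatou-plus-right-continuity argument goes through verbatim.
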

	
	 \begin{proof}
	 	$\mathrm{(i)}$\ Let $(X^{(n)})_{n\geq 1}$ be a Cauchy sequence in $(\mathscr{Y}_T,{\bm \rho}_p)$, i.e.
	 	\[\lim_{n,m\to \infty} {\bm \rho}_p(X^{(n)},X^{(m)})=0.\]
	 	Take a subsequence of $(X^{(n)})$ such that
	 	\[{\bm \rho}_{p}(X^{(n_k)},X^{(n_{k+1})})<\frac 1{2^k},\quad k\geq 1,\] which yields that
	 	\[\p\big(\rho_{\mathcal{D}}(X^{(n_k)},X^{(n_{k+1})})\geq \frac 1{k^2}\big)\leq \frac{k^{2p}}{2^{kp}},\quad k\geq 1.\]
	 	Since $\sum_{k\geq 1}\frac{k^{2p}}{2^{kp}}<\infty$, we get from Borel-Cantelli's lemma that
	 	\[\p\big(\rho_{\mathcal{D}}(X^{(n_k)},X^{(n_{k+1})})\geq \frac 1{k^2},\ \text{i.o.}\big)=0.\]
	 	Hence, there is a measurable set $\Xi$ with $\p^0(\Xi)=1$ such that for every $\omega\in \Xi$
	 	\[\sum_{k=1}^\infty \rho_{\mathcal{D}}\big(X^{(n_k)}(\omega), X^{(n_{k+1})}(\omega)\big)<\infty.\]
	 	By the completeness of $\big(\mathcal{D}([0,T];\R^d), \rho_{\mathcal{D}}\big)$, there exists a $\tilde X(\omega)\in \mathcal{D}([0,T];\R^d)$ such that
	 	\begin{equation}\label{c-6}
	 		\lim_{k\to \infty} \rho_{\mathcal{D}}(X^{(n_k)}(\omega), \tilde X(\omega))=0,\quad \omega\in \Xi.
	 	\end{equation}

	 	Next, we shall show $\tilde X\in \mathscr{Y}_T$ and ${\bm \rho}_p(X^{(n_k)},\tilde X)\to 0$ as $k\to \infty$. For the first assertion, as $\tilde X(\omega)\in \mathcal{D}([0,T]; \R^d)$,  it suffices to show $\tilde X$ is $\F_t$-adapted, which is the key point of this lemma. To this aim, it is crucial to apply the following property of a probability measure over $\mathcal{D}([0,T];\R^d)$. Let $\tilde \Gamma$ be the distribution of $\tilde X$ in $\mathcal{D}([0,T];\R^d)$, and let
	 	$\mathscr{T}_{\tilde \Gamma}$ consist of those $t\in [0,T]$ such that $\p(J_t)=0$, where
	 	\[J_t=\{X\in \mathcal{D}([0,T];\R^d);\ X(t)\neq X(t-)\}.\] According to \cite[p.124]{Bill}, $\mathscr{T}_{\tilde \Gamma}$ contains $0$ and $T$ and its complement in $[0,T]$ is at most countable. As Skorokhod convergence implies $X_t^{(n_k)}\to \tilde X_t$ for continuity points $t$ of $\tilde X$, it yields from \eqref{c-6} that
	 	\begin{equation}\label{c-7}
	 		\lim_{k\to \infty} |X_t^{(n_k)}-\tilde X_t|=0,\quad \p\text{-a.e.}, \ \forall\,t\in \mathscr{T}_{\tilde \Gamma}.
	 	\end{equation}
	 	Combining this with the fact $X_t^{(n_k)}$ is $\F_t$-adapted, we have that $\tilde X_t$ is $\F_t$-adapted for all $t\in \mathscr{T}_{\tilde \Gamma}$. Furthermore, since   $t\mapsto \tilde X_t$ is right-continuous, for any $t\in (0,T)$ there exists a sequence of $t_n\in \mathscr{T}_{\tilde \Gamma}\downarrow t$ such that
	 	$\tilde X_{t_n}\longrightarrow \tilde X_t$, $\p$-a.e. as $n\to \infty$. By $\tilde X_{t_n}\in \F_{t_n}$ and $\F_{t}=\F_{t+}$, we finally get that $\tilde X_t\in \F_t$ for all $t\in [0,T]$. Therefore, $\tilde X\in \mathscr{Y}_T$.
	 	
	 	Due to Fatou's lemma, by \eqref{c-6} and $(X^{(n)})_{n\geq 1}$ is a Cauchy sequence in $\mathscr{Y}_T$, we have
	 	\[ \E\big[\rho_{\mathcal{D}}(X^{(n)},\tilde X)^p\big]\leq \lim_{l\to \infty} \E\big[\rho_{\mathcal{D}}(X^{(n)},X^{(n_l)})^p\big],
	 	\]
	 	and further
	 	\[\lim_{n\to \infty}{\bm \rho}_p(X^{(n)},\tilde X)\leq \lim_{n\to \infty}\E\big[\rho_{\mathcal{D}} (X^{(n)},\tilde X)^p\big]^{1/p}\leq \lim_{n\to \infty} \lim_{l\to \infty} \E\big[\rho_{\mathcal{D}}(X^{(n)},X^{(n_l)})^p\big]^{1/p} =0.
	 	\]
	 	It follows immediately that $X^{(n)}$ converges to $\tilde X$ in $\mathscr{Y}_T$, and we conclude that $(\mathscr{Y}_T,{\bm \rho}_T)$ is a complete metric space.
	 	
	 	$\mathrm{(ii)}$\ Let $(X^{(n)})_{n\geq 1}$ be a Cauchy sequence in $(\mathscr{Y}_T(\ell),{\bm\rho}_{p,\infty})$, which implies that
	 	\[ \E\big[\|X^{(n)}-X^{(m)}\|_\infty^p\big]= {\bm \rho}_{p,\infty}(X^{(n)}, X^{(m)})^p\longrightarrow 0,\ \ \text{as $n,m\to \infty$.}
	 	\]
	 	Consider the sequence of  random functions $(X^{(n)}-\ell)_{n\geq 1}$ in $\mathcal{C}([0,T];\R^d)$.
	 	Then following the approach of (i) we can show that there exists a subsequence  $(X^{(n_k)}-\ell)_{k\geq 1}$ and a random function  $(\tilde Y_t)_{t\in [0,T]}$ in $\mathcal{C}([0,T];\R^d)$ such that
	 	\[\lim_{k\to \infty} \|X^{(n_k)}(\omega)-\ell(\omega)-\tilde Y(\omega)\|_{\infty} =0,\quad \text{$\p$-a.e. $\omega$.}\]
	 	Hence, we can get from the $\F_t$-adaptness of $X^{(n)}$ and $\ell$ that  $(\tilde Y_t)$ is also $\F_t$-adapted, and further $\tilde X_t:=\tilde Y_t+\ell_t$, $t\in [0,T]$, is the desired limit point of $(X^{(n)})_{n\geq 1}$ in $(\mathscr{Y}_{T}(\ell), {\bm \rho}_{p,\infty})$.
	 \end{proof}

	 \begin{mythm}\label{thm-2.2}
	 	Assume  that (A1)-(A3) hold, and in addition that the coefficient $g(x,\mu,i,z)$ does not depend on arguments $x$ and $\mu$.
	 	Then, MKV SDE \eqref{a-1} admits a unique strong solution.
	 \end{mythm}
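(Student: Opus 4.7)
The plan is to build on Theorem \ref{thm-2.1}, which already supplies uniqueness, and concentrate on the existence of a strong solution by a contraction argument in a carefully chosen function space. The restriction that $g$ depends only on $(\La_{t-},z)$ is crucial because it turns the jump noise into an additive contribution: set
\[
L_t=\int_0^t\!\!\int_{\R^d_0}g(\La_{s-},z)\wt{\mathcal{N}}(\d s,\d z),
\]
which by (A2) and standard $L^2$-estimates for compensated Poisson integrals belongs to $\mathscr Y_T$. I then search for the solution inside the complete metric space $(\mathscr Y_T(L),{\bm\rho}_{T,\infty})$ supplied by Lemma \ref{lem-2.1}(ii), in which every candidate process automatically carries the prescribed jump behaviour while its continuous remainder is free.

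Given $X\in\mathscr Y_T(L)$, its conditional law $\mu_t^X:=\law(X_t|\F_t^0)$ is a well-defined $\mathbb F^0$-adapted $\pb_2(\R^d)$-valued process. Define $\Phi(X)$ to be the strong solution of the decoupled hybrid SDE
\[
\d Y_t=b(Y_t,\mu_t^X,\La_t)\d t+\sigma(Y_t,\mu_t^X,\La_t)\d W_t+\int_{\R^d_0}g(\La_{t-},z)\wt{\mathcal{N}}(\d t,\d z),\qquad Y_0\text{ given.}
\]
Because $\mu_t^X$ and $\La_t$ are $\mathbb F$-adapted, the coefficients $b(\cdot,\mu_t^X,\La_t)$ and $\sigma(\cdot,\mu_t^X,\La_t)$ are $\F_t$-adapted and, by (A1)--(A2), Lipschitz in $x$ with linear growth uniformly in $(\omega,t)$; existence and uniqueness of $\Phi(X)$ then follows from classical hybrid SDE theory (cf.\! \cite{YZ}), with non-explosion in the infinitely countable case ensured by (A3). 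Since $\Phi(X)-L$ solves an SDE driven only by $\d t$ and $\d W_t$, it lies in $\mathcal C([0,T];\R^d)$, so $\Phi(X)\in\mathscr Y_T(L)$.

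The heart of the argument is to show that $\Phi$ is a contraction. For $X,\tilde X\in\mathscr Y_T(L)$, Itô's formula applied to $|\Phi(X)_t-\Phi(\tilde X)_t|^2$ together with (A1) and the Burkholder-Davis-Gundy inequality yields
\[
\E\bigl[\sup_{s\le t}|\Phi(X)_s-\Phi(\tilde X)_s|^2\bigr]\le C\!\int_0^t\!\Bigl(\E\bigl[\sup_{u\le s}|\Phi(X)_u-\Phi(\tilde X)_u|^2\bigr]+\E\bigl[\W_2^2(\mu_s^X,\mu_s^{\tilde X})\bigr]\Bigr)\d s,
\]
the jump integrals cancelling because the common $g(\La_{s-},z)$ is subtracted away. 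Combining the pointwise bound $\W_2^2(\mu_s^X,\mu_s^{\tilde X})\le\E^1[|X_s-\tilde X_s|^2]$ (as already used in the proof of Theorem \ref{thm-2.1}) with Gronwall's inequality applied to the $\Phi$-side gives
\[
\E\bigl[\sup_{s\le T}|\Phi(X)_s-\Phi(\tilde X)_s|^2\bigr]\le C'(T)\!\int_0^T\!\E\bigl[\sup_{u\le s}|X_u-\tilde X_u|^2\bigr]\d s.
\]
A Bielecki-type weighted norm on $\mathscr Y_T(L)$, or iteration on sufficiently short sub-intervals pasted together, converts this into a strict contraction in the $L^2$-supremum norm, which dominates ${\bm\rho}_{T,\infty}$ by Cauchy-Schwarz. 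Banach's theorem then produces a unique $Y\in\mathscr Y_T(L)$ with $\Phi(Y)=Y$, namely a strong solution of \eqref{a-1}.

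The main technical obstacle I anticipate is the careful handling of $\mu_t^X$ as an $\F_t^0$-measurable random element of $(\pb_2(\R^d),\W_2)$, which is required both to give the composition $b(\cdot,\mu_t^X,\La_t)$ its Lipschitz-adapted status and to justify the integrability used in the Gronwall step; a secondary subtlety, specific to $\S$ being countably infinite, is that the constants appearing in the contraction estimate must stay finite, which forces one to transport the Lyapunov bound of (A3) through the $\W_2$-based a priori moment analysis.
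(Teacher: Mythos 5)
Your proposal is correct and follows essentially the same route as the paper: the same decoupling via the additive jump process $\ell_t=\int_0^t\!\int_{\R^d_0}g(\La_{s-},z)\wt{\mathcal{N}}(\d s,\d z)$, the same complete space $(\mathscr{Y}_T(\ell),{\bm\rho}_{T,\infty})$ from Lemma \ref{lem-2.1}(ii), the same frozen-measure hybrid SDE defining the map, and the same Gronwall-type contraction estimate. The only cosmetic difference is that the paper closes the argument by iterating the estimate to make $\Psi^n$ a strict contraction via the $T^n/n!$ factor, whereas you invoke a Bielecki weighted norm or short-interval pasting --- these are interchangeable standard devices.
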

	
	 \begin{proof} We prove this theorem using the decoupling method, which has been widely used in the study of MKV SDEs; see the monograph \cite[Chapter 2]{Car} and references therein for its application in the study of MKV SDEs with common noise. To prove this theorem, it suffices to show that for each fixed $T>0$ there exists a unique strong solution $(Y_t)_{t\in [0,T]}$ for SDE \eqref{a-1}.
	 	
	 	Let $\ell_t=\int_0^t\!\int_{\R_0^d}\!g(\La_s,z)\wt{\mathcal{N}}(\d s, \d z)$ for $t\in [0,T]$. It is clear that $\ell\in \mathscr{Y}_T$.
	 	For a given $X\in \mathscr{Y}_T(\ell)$, denote  $\mu_t=\law(X_t|\F_t^0)$. Consider the SDE with regime-switching:
	 	\begin{equation}\label{c-8}
	 		\begin{split}
	 			\d Y_t^\mu&=b(Y_t^\mu, \mu_t,\La_t)\d t+\sigma(Y_t^\mu,\mu_t,\La_t)\d W_t+\d \ell_t
	 		\end{split}
	 	\end{equation}
	 	with $Y_0^\mu=Y_0$. Under the conditions (A1)-(A3), according to \cite{MY,Xi}, SDE \eqref{c-8} admits a unique strong solution $(Y_t^\mu)_{t\in [0,T]}$. Moreover,
	 	\[Y_t^\mu-\ell_t=\int_0^tb(Y_s^\mu,\mu_s,\La_s)\d s+\int_0^t\sigma(Y_s^\mu,\mu_s,\La_s)\d W_t\in \mathcal{C}([0,T];\R^d).\]
	 	Hence, $(Y_t^\mu)\in \mathscr{Y}_T(\ell)$.
	 	Now, let us introduce a mapping
	 	\begin{equation*}
	 		\Psi:\mathscr{Y}_T(\ell)\to \mathscr{Y}_T(\ell), \ X\mapsto Y^\mu.
	 	\end{equation*}
	 	In what follows, we shall show $\Psi$ is a strict contraction in $(\mathscr{Y}_T(\ell), {\bm \rho}_{2,\infty})$.
	 	
	 	Indeed, for another $\tilde X\in \mathscr{Y}_T(\ell)$, let $(Y_t^\nu)$ be a solution to SDE \eqref{c-8} by replacing $\mu_t=\law(X_t|\F_t^0)$ there with $\nu_t:=\law(\tilde X_t|\F_t^0)$ for $t\in [0,T]$.
	 	Applying It\^o's formula, by (A1) and Burkholder-Davis-Gandy's inequality,
	 	\begin{align*}
	 		\E\big[\sup_{0\leq s\leq t}\! |Y_s^\mu-Y_s^\nu|^2\big]
	 		&\leq 2T\E\Big[\int_0^t\! |b(Y_s^\mu,\mu_s,\La_s)-b(Y_s^\nu,\nu_s,\La_s)|^2\d s\Big]\\
	 		&\quad +2\E\Big[\Big(\int_0^t(\sigma(Y_s^\mu,\mu_s,\La_s)-\sigma(Y_s^\nu,\nu_s ,\La_s))\d W_s\Big)^2\Big]\\
	 		&\leq ( 2T K_1 \!+\! 2C_2K_1)\E\Big[\int_0^t\!|Y_s^\mu\!-\!Y_s^\nu|^2\!+\!\W_2(\mu_s,\nu_s)^2 \d s\Big]  \\
	 		&\leq \wt C\!\int_0^t\!\E[\sup_{0\leq r\leq s}|Y_r^\mu\!-\!Y_r^\nu|^2]\d s\!+\!\wt C \!\int_0^t\!\E[\sup_{0\leq r\leq s}|X_r\!-\!\tilde X_r|^2]\d s,
	 	\end{align*} where $\wt C =2TK_1+2C_2 K_1$. By virtue of Gronwall's inequality,
	 	\begin{equation}\label{c-9}
	 		\begin{aligned}
	 			{\bm \rho}_{t,2,\infty}(Y,\tilde Y):=\E[\sup_{0\leq s\leq t}\!|Y_s^\mu\!-\!Y_s^\nu|^2]&\leq \wt C \e^{\wt C  t}\int_0^t\!\E\big[\sup_{0\leq r\leq s}\!|X_r-\tilde X_r|^2\big]\d s\\
	 			&=\wt C \e^{\wt C  t}\int_0^t\!{\bm \rho}_{s,2,\infty}(X,\tilde X)\d s.
	 		\end{aligned}
	 	\end{equation}
Iterating inequality \eqref{c-9} for $n\in \N$,
\begin{equation}\label{c-10}
\begin{split}
	 			{\bm \rho}_{2,\infty}(\Psi^n(X),\Psi^n(\tilde X))&={\bm \rho}_{T,2,\infty}(\Psi^n(X),\Psi^n(\tilde X))\\
&\leq {\wt C}^n\e^{n\wt C T}\int_0^T\frac{(T-s)^{n-1}}{(n-1)!}{\bm \rho}_{s,2,\infty}(X,\tilde X)\d s\\
	 			&\leq \frac{{\wt C}^n\e^{n\wt C T} T^n}{n!} {\bm \rho}_{T,2,\infty} (X,\tilde X)\\
&=\frac{{\wt C}^n\e^{n\wt C T} T^n}{n!}  {\bm \rho}_{2,\infty} (X,\tilde X),
\end{split}
\end{equation}
 where $\Psi^n$ stands for the $n$-th composition of the mapping $\Psi$ with itself. \eqref{c-10} shows that for $n$ large enough, $\Psi^n$ is a strict contraction. By fixed point theorem, $\Psi$ admits a unique fixed point in $(\mathscr{Y}_T(\ell), {\bm \rho}_{2,\infty})$, which satisfies \eqref{a-1}. So, SDE \eqref{a-1} admits a  solution.
 Furthermore, by virtue of Theorem \ref{thm-2.1}, SDE \eqref{a-1} admits a unique solution. Hence, we complete the proof. 
	 \end{proof}
	
\section{Averaging principle for MKV SDEs}
	
	 This section is devoted to the study on the limit behavior of two time-scale MKV SDEs with Markovian regime-switching. For $\veps>0$, let $(\La_t^\veps)_{t\geq 0}$ be a continuous-times Markov chain on $\S=\{1,2,\ldots, m_0\}$ with $m_0\leq \infty$ with the transition rate matrix $\frac 1\veps(q_{ij})_{i,j\in\S}$. When the state space $\S$ is infinite, the strong solutions and the strong Feller properties for regime-switching diffusion processes have been studied in \cite{Sh15}. A more comprehensive study on the properties of Markov chain with the transition rate matrix $\frac 1\veps(q_{ij})_{i,j\in\S}$ was launched in \cite{YZ98}. As we shall consider the $L^2$-convergence of the two time-scale MKV SDEs, the diffusion coefficient $\sigma(x,\mu,i)$ should not depend on the fast component $i$ (see \cite{Sh23} for more discussion on this assertion). Besides, more restrictions on the jump coefficient $g$ are needed in current work, that is, $g$ is supposed also independent of $x$, $\mu$ and $i$.  In brief, in this section we consider the following condition distribution dependent SDE:
\begin{equation}\label{d-1}
  \begin{split}
	\d Y_t^\veps&= b(Y_t^\veps,\law(Y_t^\veps|\F_t^0), \La_t^\veps)\d t \!+\!\sigma(Y_t^\veps,\law(Y_t^\veps|\F_t^0))\d W_t\!  + \!\int_{\R^d_0}\!g(z) \wt{\mathcal{N}}(\d t,\d z),
  \end{split}
\end{equation} endowed with the initial value $Y_0^\veps=Y_0\in \F_0$ satisfying $\E|Y_0|^2<\infty$, $\La_0^\veps=i_0\in\S$. Recall that $(\pi_i)$ is the invariant probability measure given in (A4). Define
\begin{equation}\label{d-2}
\begin{split}
	\bar{b}(x,\mu)&= \sum_{i\in\S}b(x,\mu,i)\pi_i, \quad x,z\in \R^d,\, \mu\in\! \pb(\R^d).
\end{split}
\end{equation} Then the limit process   will be given by the following distribution dependent SDE
\begin{equation}\label{d-3}
 \d \bar{Y}_t=\bar{b}(\bar{Y}_t,\law(\bar{Y}_t))\d t+ {\sigma}(\bar{Y}_t,\law(\bar Y_t))\d W_t+\int_{\R^d_0}\! {g}(z)\wt{\mathcal{N}}(\d t,\d z)
\end{equation} with $\bar{Y}_0=Y_0$ given as above.

	
\begin{mythm}\label{thm-3.1}
Assume that (A1)-(A4) hold. In addition, suppose that $\sigma(x,\mu,i)$ is independent of argument $i$ and $g(x,\mu,i,z)$ independent of arguments $x,\mu,i$. Suppose that there exists a function $h:\R^d\to (0,\infty)$ such that
	 	\[ |g(z)|\leq h(z), \ \forall\,z\in \R^d, \ \text{and}\  \int_{\R_0^d} h(z)^2\lambda(\d z)<\infty.\]
	 	Then,
	 	\begin{equation}\label{d-4}\lim_{\veps\to 0} \E\big[|Y_t^\veps-\bar{Y}_t|^2\big]=0.
	 	\end{equation}
	 \end{mythm}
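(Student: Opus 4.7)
The route is a Gronwall inequality for $\E|Y_t^\veps-\bar Y_t|^2$ built from It\^o's formula, with the genuinely $\veps$-dependent remainder controlled by a Khasminskii-type time-discretization together with the ergodic bound (A4).

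Before any estimates I would record one structural observation that is conceptually what drives the whole statement: since the averaged coefficients $\bar b,\bar\sigma,\bar g$ no longer carry the regime index, $\bar Y$ is driven only by $Y_0\in\F_0$, the Brownian motion $W$ and the compensated Poisson measure $\wt{\mathcal N}$, all of which live on $\Omega^1$. Consequently $\bar Y_t$ is independent of $\F_t^0$, so that $\law(\bar Y_t|\F_t^0)=\law(\bar Y_t)$ a.s. This identity is exactly what justifies replacing the conditional law of \eqref{d-1} by the unconditional one in \eqref{d-3}, and it is the device that lets me compare the two solutions inside the Wasserstein framework of Section~2.

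Subtracting \eqref{d-3} from \eqref{d-1} and using BDG on the diffusion term and It\^o isometry on the jump term, together with (A1) and the coupling inequality $\W_2^2(\law(Y_s^\veps|\F_s^0),\law(\bar Y_s|\F_s^0))\le \E^1|Y_s^\veps-\bar Y_s|^2$ from the proof of Theorem~\ref{thm-2.1}, I split each $\veps$-dependent coefficient as
\[f(Y_s^\veps,\law(Y_s^\veps|\F_s^0),\La_s^\veps)-\bar f(\bar Y_s,\law(\bar Y_s))=\bigl[f(\cdot,\La_s^\veps)-f(\cdot,\La_s^\veps)\bigr]_{\text{Lipschitz part}}+\bigl[f(\bar Y_s,\law(\bar Y_s),\La_s^\veps)-\bar f(\bar Y_s,\law(\bar Y_s))\bigr],\]
absorbing the Lipschitz part into $\int_0^t\E|Y_s^\veps-\bar Y_s|^2\d s$ and producing
\[\E\sup_{s\le t}|Y_s^\veps-\bar Y_s|^2\le C\int_0^t\E|Y_s^\veps-\bar Y_s|^2\d s+C\,R_\veps(t),\]
where $R_\veps(t)$ gathers the averaging errors
\[\E\!\int_0^t\!\!\bigl|b(\bar Y_s,\law(\bar Y_s),\La_s^\veps)-\bar b(\bar Y_s,\law(\bar Y_s))\bigr|^2\d s\]
together with the corresponding expressions for $\sigma$ and, integrated against $\lambda(\d z)$, for $g$. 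Step~2 is to prove $R_\veps(t)\to0$: partition $[0,t]$ into blocks of length $\delta=\delta(\veps)$ with $\veps\ll\delta\ll1$, freeze $\bar Y$ at the left endpoint of each block (the freezing error is controlled by the second-moment continuity of $\bar Y$, which follows from (A2) and the envelope $h$ assumed on $g$), and on each frozen block rewrite the fast-chain integral using $\La^\veps_t\stackrel{d}{=}\La_{t/\veps}$ and the quantitative ergodicity $\|P_{u/\veps}(i,\cdot)-\pi\|_\var\le\theta_i\eta_{u/\veps}$ from (A4), whose tail integrability $\int_0^\infty\eta_u\d u<\infty$ supplies the required decay as the block length in $\veps$-scale tends to infinity. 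In the infinite-$\S$ case, (A3) must be invoked to propagate the Lyapunov bound $\E V(\La_s^\veps)\le V(i_0)\e^{\kappa_0 s/\veps}\cdot$ in an $\veps$-uniform way on compact $t$-intervals after suitable truncation, so that the factors $\theta_i$ do not destroy the integrability. Finally Gronwall's inequality closes the loop.

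The main obstacle is Step~2 applied to the diffusion and jump coefficients rather than the drift: after It\^o isometry these contribute $L^2$-in-time squared differences with no free time-averaging inside the square, so the usual drift-style averaging argument does not apply verbatim. What makes it work is the combination of the mixing rate from (A4), the slow variation of $\bar Y$ over a block of length $\delta$, and the uniform envelope $h(z)$ on $g$; together these upgrade the pointwise ergodic estimate into an averaged $L^2$ estimate on the frozen blocks. Controlling the infinite-state case along the fast chain, via (A3), is the secondary technical point one must not overlook.
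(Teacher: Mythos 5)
Your overall skeleton coincides with the paper's: the It\^o/Gronwall inequality for $\E|Y_t^\veps-\bar Y_t|^2$, the splitting of each coefficient difference into a Lipschitz part (absorbed via (A1) and the coupling inequality, which is exactly the paper's Lemma \ref{lem-3.1}) and an averaging remainder evaluated along $\bar Y$, and the structural observation that $\bar Y$ lives on $\Omega^1$ and is therefore independent of $\F^0_t$. Where you diverge is the treatment of the remainder $R_\veps(t)$. The paper uses no Khasminskii discretization at all: since the fast chain is autonomous and independent of $\bar Y$, it estimates the remainder pointwise in $s$ through the total-variation convergence of the one-time marginal, $\|P_{s/\veps}(i_0,\cdot)-\pi\|_{\var}\le\theta_{i_0}\eta_{s/\veps}$ from (A4), after a truncation at level $\kappa$ (Chebyshev plus a fourth-moment bound on $\bar Y$) that reduces squared coefficient differences to first powers; integrating $\eta_{s/\veps}$ over $[0,t]$ then produces the factor $\veps\int_0^\infty\eta_u\,\d u$, and $\kappa\to\infty$ is taken after $\veps\to0$. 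Your block-freezing machinery and the Lyapunov propagation along the fast chain are superfluous in this setting ((A3) only enters to guarantee existence and uniqueness of the $Q$-process).

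The genuine gap sits exactly at the point you yourself flag as ``the main obstacle'' and then attempt to dispatch in a single unsupported sentence. For the diffusion and jump terms, It\^o's isometry forces you to control
\[\E\int_0^t\big|f(\bar Y_s,\law(\bar Y_s),\La_s^\veps)-\bar f(\bar Y_s,\law(\bar Y_s))\big|^2\,\d s,\]
with the square \emph{inside} the time integral. Time-averaging over a frozen block controls $\big|\frac1\delta\int_{\mathrm{block}}(f(\La_s^\veps)-\pi(f))\,\d s\big|$, not $\frac1\delta\int_{\mathrm{block}}|f(\La_s^\veps)-\pi(f)|^2\,\d s$: as $\veps\to0$ the latter converges, by ergodicity, to the $\pi$-variance $\sum_{i\in\S}\pi_i|f(i)-\pi(f)|^2$, which is strictly positive unless $f$ is $\pi$-a.e.\ constant. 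Concretely, take $\S=\{1,2\}$, $\pi=(\tfrac12,\tfrac12)$, $\sigma(1)=1$, $\sigma(2)=-1$, so $\bar\sigma=0$; then $\E\int_0^t|\sigma(\La_s^\veps)-\bar\sigma|^2\,\d s=t$ for every $\veps$. No combination of mixing rate, slow variation of $\bar Y$, and the envelope $h$ can make this quantity small, so your Step 2 fails for $\sigma$ and $g$ as written. Be aware that the same quantity appears in the paper's terms $\mathrm{(I\!I)}$ and $\mathrm{(I\!I\!I)}$, where it is dispatched via the claimed bound $\E^0[|h(\La_s^\veps)-\pi(h)|]\le|h|_\infty\|P_{s/\veps}(i_0,\cdot)-\pi\|_{\var}$; since the left-hand side tends to $\pi(|h-\pi(h)|)$ while the right-hand side tends to $0$, you should scrutinize that reduction rather than import it. Any genuine repair for $i$-dependent $\sigma$ and $g$ must average at the level of the quadratic variation (e.g.\ $\bar a=\sum_i\pi_i\,\sigma(\cdot,i)\sigma(\cdot,i)^{T}$) and settle for a weaker mode of convergence, not a sharper ergodic estimate.
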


	 We need some preparation before presenting the proof of this theorem.
	
\begin{mylem}\label{lem-3.1}
Under the assumptions of Theorem \ref{thm-3.1}, for $t>s\geq 0$, it holds
 \begin{equation}\label{d-5}
 \begin{split}
 \E\big[\W_2^2(\law(Y_t^\veps|\F_t^0),\law(\bar{Y}_t))\big]&\leq \E[ |Y_t^\veps-\bar{Y}_t|^2],\\
 \W_2^2\big(\law(Y_t^\veps|\F_t^0),\law(Y_s^\veps|\F_s^0)\big)&\leq \E^1\big[\big|Y_t^\veps-Y_s^\veps|^2\big],
 \end{split}
 \end{equation}
 and
 \begin{equation}\label{d-5.5}
 \E\big[|Y_t^\veps\!-\!Y_s^\veps|^2\big|\F_s^0\big]\leq \widehat{C}\int_s^t\!\E\big[ 1+|Y_r^\veps|^2\!+\!\E^1[|Y_r^\veps|^2]\big|\F_s^0\big]\d r\!+\!(t\!-\!s)\int_{\R_0^d}\!h(z)^2\lambda(\d z),
 \end{equation}
 where $\widehat{C}=9K_2(1+t-s)$. Similar estimates are also valid for the process $(\bar Y_t)$.
\end{mylem}
	
	 \begin{proof}
	 	Notice that according to \cite[Lemma 2.4,p.113]{Car},  the mapping $\omega^0\mapsto \law(Y^\veps_t(\omega^0, \cdot))$ from $(\Omega^0,\F^0,\p^0)$ to $\pb(\R^d)$ is almost surely well defined under $\p^0$, and  provides a conditional law of $Y^\veps_t$ given $\F^0_t$. Also, notice that
for $\omega^0\in \Omega^0$,
\[\p^1[Y_t^\veps(\omega^0,\cdot)\in D]=[\law(Y_t^\veps|\F_t^0)(\omega^0)](D),\quad D\in \B(\R^d),\]
	 	and the process $(\bar{Y}_t)$ does not depend on $\Omega^0$. So, under $\p^1$, the distribution of $\bar{Y}_t$ is $\law(\bar{Y}_t)$. Hence, by the definition of Wasserstein distance,	
\begin{gather*}
\W_2^2(\law(Y_t^\veps|\F_t^0)(\omega^0),\law(\bar{Y}_t))\leq \E^1\big[|Y_t^\veps(\omega^0,\cdot)-\bar{Y}_t|^2\big],\\
\W_2^2(\law(Y_t^\veps|\F_t^0)(\omega^0),\law(Y_s^\veps|\F_s^0)(\omega^0) )\leq \E^1\big[|Y_t^\veps(\omega^0,\cdot)-Y_s^\veps(\omega^0,\cdot)|^2\big]
\end{gather*}
Hence, the desired estimate \eqref{d-5} follows immediately.

By growth conditions for $b,\sigma$ and $g$, it yields from \eqref{d-1} that
\begin{align*}
  &\E[|Y_t^\veps-Y_s^\veps|^2\big|\F_s^0\big]\\
  &\leq 9\E\Big[\Big|\int_s^t\!b(Y_r^\veps,\law(Y_r^\veps|\F_r^0),\La_r^\veps) \d r\Big|^2+\int_s^t\|\sigma(Y_r^\veps,\law(Y_r^\veps|\F_r^0))\|^2\d r\\
  &\qquad \qquad+\int_s^t\!\int_{\R^d_0}\!h(z)^2\lambda(\d z)\d r\Big|\F_s^0\Big]\\
  &\leq 9(1+t-s)K_2\int_s^t\!\E\big[1+|Y_r^\veps|^2+\E^1[|Y_r^\veps|^2] \big|\F_s^0\big]\d r\!+\!(t\!-\! s)\int_{\R_0^d}\!h(z)^2\lambda(\d z).
\end{align*}
This is just \eqref{d-5.5} and hence we complete the proof.
\end{proof}

\noindent\textbf{Proof of Theorem \ref{thm-3.1}}\
According to Theorem \ref{thm-2.2}, under the assumptions of this theorem, SDE  \eqref{d-1}   admits a unique strong solution $(Y_t^\veps)_{t\geq 0}$  for each $\veps>0$. By viewing SDE \eqref{d-3} as a special case of SDE \eqref{d-1}, we have that SDE \eqref{d-3} also admits a unique strong solution  $(\bar Y_t)$.

Using condition (A2), which is uniform in $i\in\S$, there exists a positive constant    $C_1 $ depending on $T$  such that
 \begin{equation}
 \label{d-6.0}
 \E\big[\sup\nolimits_{t\in [0,T]} |Y_t^\veps|^2\big]\leq   C_1,\quad \E\big[\sup\nolimits_{t\in [0,T]}
 |\bar{Y}_t|^2\big] \leq C_1.
 \end{equation}

We need to apply the time discretization method as in \cite{Ve91,Liu}. For this, let $\delta\in (0,1)$ and $s(\delta)=[s/\delta]\delta$ for $s\!\geq \!0$, where $[s/\delta]\!=\!\max\{k\in\Z; k\leq s/\delta\}$. Applying It\^o's formula, for any  $T\!>\!0$,
\begin{equation}\label{d-6}
\begin{aligned}
&\E\big[\sup_{t\in [0,T]}|Y_t^\veps\!-\!\bar Y_t|^2\big]\\
&\leq C\E\Big[\int_0^T\!\|\sigma(Y_s^\veps,\law(Y_s^\veps|\F_s^0))- \sigma(\bar Y_s,\law(\bar{Y}))\|^2\d s\Big]\\
&\quad +\! \E\Big[\!\max_{t\in [0,T]}\!\int_0^t\!\!2\la Y_s^\veps\!-\!\bar Y_s,b(Y_s^\veps,\law(Y_s^\veps|\F_s^0),\La_s^\veps)\!-\!\bar b(\bar Y_s,\law(\bar Y_s))\raa \d s\Big]\\
&\leq C \E\Big[\int_0^t\!\|\sigma(Y_s^\veps,\law(Y_s^\veps|\F_s^0))- \sigma(\bar Y_s,\law(\bar{Y}))\|^2\d s\Big]\\
&\quad +T\E\int_0^T|b(Y_s^\veps,\law(Y_s^\veps|\F_s^0),\La_s^\veps)- b(Y_{s(\delta)}^\veps,\law(Y_{s(\delta)}^\veps|\F_{s(\delta)}^0), \La_s^\veps)|^2\d s\\
&\quad +T \E\int_0^T|\bar{b}(\bar Y_{s(\delta)},\law(\bar Y_{s(\delta)}))-\bar b(\bar Y_s,\law(\bar Y_s))|^2\d s\\
&\quad +T\E\int_0^T\!|\bar b(Y_{s(\delta)}^\veps,\law(Y_{s(\delta)}^\veps|\F_{s(\delta)}^0)) -\bar{b} (\bar Y_{s(\delta)},\law(\bar Y_{s(\delta)}))|^2\d s\\
&\quad +\E\Big[\max_{t\in [0,T]}\!\Big|\int_0^t\!b(Y_{s(\delta)}^\veps, \law(Y_{s(\delta)}^\veps|\F_{s(\delta)}^0),\La_s^\veps))-\bar{b} (Y_{s(\delta)}^\veps,\law(Y_{s(\delta)}^\veps|\F_{s(\delta)}^0))\d s\Big|^2\Big]\\
&=:\mathrm{(I)}+\mathrm{(I\!I)}+\mathrm{(I\!I\!I)}+\mathrm{(I\!V)} +\mathrm{(V)}.
\end{aligned}
\end{equation}
We proceed to estimate the terms $\mathrm{(I)}$, $\mathrm{(I\!I)}$, $\mathrm{(I\!I\!I)}$, $\mathrm{(I\!V)}$, $\mathrm{(V)}$ one by one.
	
By (A1) and Lemma \ref{lem-3.1},
\begin{equation}\label{d-6.1}
\begin{split}
  \mathrm{(I)}&=C \E\Big[\int_0^T\!\|\sigma(Y_s^\veps,\law(Y_s^\veps |\F_s^0))-\sigma(\bar Y_s,\law(\bar Y_s))\|^2\d s\Big]\\
  &\leq CK_1 \E\Big[\int_0^T\!|Y_s^\veps-\bar Y_s|^2+\W_2(\law (Y_s^\veps|\F_s^0),\law(\bar Y_s))^2\d s\Big]\\
  &\leq 2CK_1\int_0^T\!\E\big[|Y_s^\veps-\bar Y_s|^2\big]\d s.
\end{split}
\end{equation}
Analogously, by  (A1) and Lemma \ref{lem-3.1},
\begin{equation}\label{d-6.2}
\begin{aligned}
  \mathrm{(I\!I)}&\leq C\E\Big[\int_0^T\!|Y_s^\veps-Y_{s(\delta)}^\veps|^2+ \E^1[|Y_s^\veps-Y_{s(\delta)}^\veps|^2]\d s\Big]\\
  &\leq C\int_0^T\E|Y_s^\veps-Y_{s(\delta)}^\veps|^2 \d s\leq C\delta.\\
  \mathrm{(I\!I\!I)}&\leq C\int_0^T\E|\bar Y_s-\bar Y_{s(\delta)}|^2\d s\leq C\delta.\\
  \mathrm{(I\!V)}&\leq C\int_0^T\E[|Y_{s(\delta)}^\veps-\bar Y_{s(\delta)}|^2] \d s\leq C\!\int_0^T\!\E[\sup_{r\in [0,s]} |Y_r^\veps -\bar Y_r|^2]\d s.
\end{aligned}
\end{equation}
The estimate of $\mathrm{(V)}$ is a little complicated.
First, by (A2) and \eqref{d-6.0}, we have
\begin{equation}\label{d-7}
\begin{split}
  &\E\Big|\int_{T(\delta)}^T\!\! \big(b(Y_{s(\delta)}^\veps,\law(Y_{s(\delta)}^\veps|\F_{s(\delta)}^0), \La_s^\veps)-\bar b(Y_{s(\delta)}^\veps,\law(Y_{s(\delta)}^\veps|\F_{s(\delta)}^0)) \big)\d s\Big|^2\\
  &\leq 4\delta K_2\int_{T(\delta)}^T\!\! (1+2\E|Y_{s(\delta)}^\veps|^2) \d s\leq C(T)\delta^2.
\end{split}
\end{equation}
Next, for each $k\in \Z_+$,
\begin{equation*}
\begin{aligned}
  &\E\Big|\int_{k\delta}^{(k+1)\delta}\!\!\big( b(Y_{s(\delta)}^\veps,\law(Y_{s(\delta)}^\veps|\F_{s(\delta)}^0), \La_s^\veps)-\bar b(Y_{k\delta}^\veps,\law(Y_{k\delta}^\veps|\F_{k\delta}^0))\big) \d s\Big|^2\\
  &=2\E\Big[\int_{k\delta}^{(k+1)\delta}\!\!\int_{r}^{(k+1)\delta}\!\! \big( b(Y_{k\delta}^\veps,\law(Y_{k\delta}^\veps|\F_{k\delta}^0), \La_s^\veps)-\bar b(Y_{k\delta}^\veps,\law(Y_{k\delta}^\veps|\F_{k\delta}^0))\big)\\
  &\quad \cdot\big(b(Y_{r(\delta)}^\veps,\law(Y_{r(\delta)}^\veps|\F_{r(\delta)}^0), \La_r^\veps)-\bar b(Y_{r(\delta)}^\veps,\law(Y_{r(\delta)}^\veps|\F_{r(\delta)}^0))\big)\d s\d r\Big]\\
  &=4\E\Big[\int_{k\delta}^{(k+1)\delta}\!\!\int_{r}^{(k+1)\delta}\!\!\E \big[ b(Y_{k\delta}^\veps,\law(Y_{k\delta}^\veps|\F_{k\delta}^0), \La_s^\veps)-\bar b(Y_{k\delta}^\veps,\law(Y_{k\delta}^\veps|\F_{k\delta}^0)) \big|\F_r\big]\\
  &\qquad\quad  \cdot \big( b(Y_{r(\delta)}^\veps,\law(Y_{r(\delta)}^\veps|\F_{r(\delta)}^0), \La_r^\veps)-\bar b(Y_{r(\delta)}^\veps,\law(Y_{r(\delta)}^\veps|\F_{r(\delta)}^0))\big) \d s \d r\Big]\\
  &\leq 4 \E\Big[\int_{k\delta}^{(k+1) \delta}\!\!\int_{r}^{(k+1)\delta}\!\!\big|
  P_{\frac{s-r}{\veps}} b(Y_{k\delta}^\veps,\law(Y_{k\delta}^\veps|\F_{k\delta}^0),\cdot) (\La_r^\veps)-\pi\big(b(Y_{k\delta}^\veps,\law(Y_{k\delta}^\veps |\F_{ k\delta}^0),\cdot)\big)\big|\\
  &\qquad\qquad \quad \cdot \big(|b(Y_{k\delta}^\veps,\law(Y_{k\delta}^\veps|\F_{k\delta}^0),\La_s^\veps)|+
  |\bar{b}(Y_{k\delta}^\veps,\law(Y_{k\delta}^\veps|\F_{k\delta}^0))|\big)
  \d s\d r\Big].
\end{aligned}
  \end{equation*}
By virtue of (A4) and (A2), the previous inequality yields
\begin{equation}\label{d-8}
\begin{split}
&\E\Big|\int_{k\delta}^{(k+1)\delta}\!\!\big( b(Y_{s(\delta)}^\veps,\law(Y_{s(\delta)}^\veps|\F_{s(\delta)}^0), \La_s^\veps)-\bar b(Y_{k\delta}^\veps,\law(Y_{k\delta}^\veps|\F_{k\delta}^0))\big) \d s\Big|^2\\
&\leq 2\E\Big[\int_{k\delta}^{(k+1)\delta}\!\! \int_r^{(k+1)\delta}\!\!\max_{i\in\S} |b(Y_{k\delta}^\veps,\law(Y_{k\delta}^\veps|\F_{k\delta}^0),i)| \|P_{\frac{ s-r}{\veps}}(\La_r^\veps,\cdot)-\pi\|_{\mathrm{var}}\\
&\qquad \qquad\qquad  \qquad \cdot 2\sqrt{K_2(1+|Y_{k\delta}^\veps|^2+\E^1[|Y_{k\delta}^\veps|^2])}\d s \d r\Big]\\
&\leq 4K_2\E\big[ 1+2|Y_{k\delta}^{\veps}|^2\big]\int_{k\delta}^{(k+1)\delta}\!\!\int_r^{(k+1)\delta}\!\! \eta \e^{-\beta\frac{s-r}{\veps}}\d s\d r\\
&\leq 4K_2\big(1+\sup_{t\in [0,T]}\E\big[|Y_t^\veps|^2\big]\big)\Big(\frac{\veps\delta}{\beta}-\frac{\veps^2} {\beta^2}+\frac{\veps^2} {\beta^2}\e^{-\beta\frac{\delta}{\veps}} \Big).
\end{split}
\end{equation}
By virtue of \eqref{d-7}, \eqref{d-8} and \eqref{d-6.0},
\begin{equation}\label{d-9}
\begin{split}
  \mathrm{(V)}&=\E\max_{t\in [0,T]} \Big|\!\int_0^t\!b( Y_{s(\delta)}^\veps,\law(Y_{s(\delta)}^\veps |\F_{s(\delta)}^0), \La_s^\veps)-\bar{b}(Y_{s(\delta)}^\veps,\law(Y_{s(\delta)}^\veps|\F_{ s(\delta)}^0)) \d s\Big|^2\\
  &\leq 2\Big[\frac T\delta\Big] \sum_{k=0}^{[T/\delta]-1}\!\E\Big| \int_{k\delta}^{(k+1)\delta}\!\! \big(b(Y_{k\delta}^\veps, \law( Y_{k\delta}^\veps|\F_{k\delta}^0), \La_s^\veps)-\bar{b}( Y_{k\delta}^\veps,\law(Y_{k\delta}^\veps|\F_{k\delta}^0))\big)\d s\Big|^2\\
  &\quad +2\E\Big|\int_{T(\delta)}^T\!b( Y_{s(\delta)}^\veps,\law(Y_{s(\delta)}^\veps |\F_{s(\delta)}^0), \La_s^\veps)-\bar{b}(Y_{s(\delta)}^\veps,\law(Y_{s(\delta)}^\veps|\F_{ s(\delta)}^0)) \d s\Big|^2\\
  &\leq  C(T) \Big(\frac{\veps}{\beta\delta}-\frac{\veps^2} {\beta^2\delta^2}+\frac{\veps^2} {\beta^2\delta^2}\e^{-\beta\frac{\delta}{\veps}} \Big)+C(T)\delta^2.
\end{split}
\end{equation}

In all, inserting the estimates \eqref{d-6.1}, \eqref{d-6.2}, \eqref{d-9} into \eqref{d-6}, we obtain
\begin{equation}\label{d-10}
  \begin{split}
    &\E\big[ \sup_{t\in [0,T]}|Y_t^\veps-\bar{Y}_t|^2\big]\\
    &\leq C\int_0^T\!\E\big[\sup_{s\in [0,t]}\!|Y_s^\veps-\bar{Y}_s|^2\big] \d t + C (\delta+\delta^2)+C(T) \Big(\frac{\veps}{\beta\delta}-\frac{\veps^2} {\beta^2\delta^2}+\frac{\veps^2} {\beta^2\delta^2}\e^{-\beta\frac{\delta}{\veps}} \Big).
  \end{split}
\end{equation}
Take $\delta>0$ such that $\lim_{\veps\to 0}\frac{\veps}{\delta}=0$, then we get from \eqref{d-10} by Gronwall's inequality that
\[\lim_{\veps\to 0}\E[|Y_t^\veps-\bar{Y}_t|^2]=0,\]
which completes the proof of this theorem.
\fin
	
	 \section{Conditional propagation of chaos}
	
	 In this part we are interested in the following particle system with mean-field interaction and Markovian regime-switching driven by L\'evy processes:
	 \begin{equation}\label{e-1}
	 	\begin{split}
	 		X_t^{k,N}&=X_0^{k}+\int_0^t\!b(X_s^{k,N}, \mu_s^N,\La_s )\d s\!+\!\int_0^t\!\sigma(X_s^{k,N}, \mu_s^N,\La_s )\d W_s^k\\
	 		&\qquad\quad\ \, +\!\int_0^t\!\int_{\R^d}\!g(X_{s-}^{k,N}, \mu_{s-}^N, \La_{s-} , z)\wt{\mathcal{N}}^k(\d s,\d z),
	 	\end{split}
	 \end{equation}
	 where
	 \[\mu_t^N=\frac 1N\sum_{k=1}^N\delta_{X_t^{k,N}},\quad \text{where $\delta_x$ denotes the Dirac measure at point $x$.}\]
	 For $k\geq 1$, $(W_t^k)$ is a $d$-dimension standard Brownian motion, $\mathcal{N}^k(\d t,\d z)$ is a Poisson random measure with intensity measure $\lambda(\d z)\d t$ satisfying $\int_{\R^d_0} (1\wedge |z|^2)\lambda(\d z)<\infty$, and $\wt{\mathcal{N}}^k(\d t,\d z)=\mathcal{N}^k(\d t,\d z)-\lambda(\d z)\d t$ stands for the associated compensated Poisson random measure.  $(\La_t)$ is a continuous-time Markov chain on $\S=\{1,2,\ldots,m_0\}$ with $m_0\leq \infty$ and transition rate matrix $  (q_{ij})_{i,j\in\S}$. $\La_0=i_0\in\S$.  As mentioned in the introduction,  $(W_t^k)$, $ \mathcal{N}^k(\d t,\d z) $, $k\geq 1$, and $(\La_t)$ are mutually independent, and   $(W_t^k)$, $\mathcal{N}^k(\d t,\d z)$ are defined on $(\Omega^1,\F^1,\p^1)$ and $(\La_t)$ is defined on $(\Omega^0,\F^0,\p^0)$. Assume the initial value $\{X_0^k\}_{k\geq 1}$ is a sequence of i.i.d. $\F_0^1$-measurable random variables defined on $(\Omega^1,\F^1,\p^1)$.

	 In order to study the limit behavior of the system \eqref{e-1}, we introduce an auxiliary system as follows: for $k\geq 1$,
	 \begin{equation}\label{e-2}
	 	\begin{split}
	 		\hat{X}_t^k&= X_0^k+\int_0^t\!b(\hat{X}_s^k,\law(\hat{X}_s^k|\F_s^0), {\La}_s ) \d s\!+\!\int_0^t\!\sigma(\hat{X}_s^k,\law(\hat{X}_s^k|\F_s^0), {\La}_s )\d W_s^{k}\\
	 		&\qquad +\int_0^t\int_{\R^d_0}\! g(\hat{X}_{s-}^k, \law(\hat{X}_{s-}^k|\F_{s-}^0), {\La}_{s-} ,z)\wt{\mathcal{N}}^k(\d s,\d z).
	 	\end{split}
	 \end{equation}
	 Notice that given $\omega^0\!\in\! \Omega^0$, $\{\hat{X}_t^k(\omega^0,\cdot);k\geq 1\}$ are i.i.d.\! for $t\geq 0$ due to the uniqueness of solution to SDE \eqref{e-2} under conditions (A1), (A2).
	
	 \begin{mythm}\label{thm-4.1}
	 	Assume that (A1)-(A3) hold and for some $\E[|X_0^1|^q] <\infty$ for some $q>2$. Let $(Y_t)$ be the solution to \eqref{a-1} with initial value $Y_0=X_0^1$. Then for $T>0$ there exists a constant $\wt C>0$ depending on $T$, $d$, $q$ and $\E[|X_0^1|^q]$ such that
	 	\begin{equation}\label{e-3}
	 		\max_{1\leq k\leq N}\E\Big[\sup_{t\in [0,T]}\!|X_t^{k,N}-\hat{X}_t^k|^2\Big]\leq \wt C \epsilon_N,
	 	\end{equation}
	 	and
	 	\begin{equation}\label{e-4}
	 		\sup_{t\in [0,T]}\E\Big[\W_2^2\Big(\frac1N\sum_{k=1}^N\delta_{X_t^{k,N}}, \law(Y_t|\F_t^0)\Big)\Big]\leq \wt C \epsilon_N,
	 	\end{equation}
	 	where
	 	\begin{equation}\label{e-5}
	 		\epsilon_N=\begin{cases}
	 			N^{-1/2}+N^{-(q-2)/q},&\text{if $d<4$, $q\neq 4$},\\
	 			N^{-1/2}\log(1+N)+N^{-(q-2)/q}, &\text{if $d=4$, $q\neq 4$},\\
	 			N^{-2/d}+N^{-(q-2)/q}, &\text{if $d>4$, $q\neq d/(d-2)$}.
	 		\end{cases}
	 	\end{equation}
	 \end{mythm}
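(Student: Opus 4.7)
I will compare the particle system \eqref{e-1} with the auxiliary i.i.d.\! system \eqref{e-2} via the natural synchronous coupling: both systems share the initial data $X_0^k$ and are driven by the same Brownian motions $W^k$, the same Poisson random measures $\wt{\mathcal{N}}^k$, and the same regime-switching process $(\La_t)$. Applying It\^o's formula to $|X_t^{k,N}-\hat X_t^k|^2$, followed by Burkholder-Davis-Gundy's inequality (for both the $\sigma$-martingale and the compensated Poisson integral) and the Lipschitz condition (A1), one reaches
\begin{equation*}
\E\Big[\sup_{s\leq t}|X_s^{k,N}-\hat X_s^k|^2\Big]\leq C\!\int_0^t\!\E\Big[\sup_{r\leq s}|X_r^{k,N}-\hat X_r^k|^2\Big]\d s+C\!\int_0^t\!\E\big[\W_2^2(\mu_s^N,\law(\hat X_s^k|\F_s^0))\big]\d s.
\end{equation*}
Introducing the empirical measure $\hat\mu_s^N:=\frac 1N\sum_{k=1}^N\delta_{\hat X_s^k}$ of the limit particles, I split the Wasserstein term by the triangle inequality
\begin{equation*}
\W_2^2(\mu_s^N,\law(\hat X_s^k|\F_s^0))\leq 2\W_2^2(\mu_s^N,\hat\mu_s^N)+2\W_2^2(\hat\mu_s^N,\law(\hat X_s^k|\F_s^0)),
\end{equation*}
where the first piece is controlled by $\frac 1N\sum_{j=1}^N|X_s^{j,N}-\hat X_s^j|^2$ through the synchronous coupling.

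The decisive step is to bound $\E[\W_2^2(\hat\mu_s^N,\law(\hat X_s^k|\F_s^0))]$ by $C\epsilon_N$. Conditional on $\F_s^0$, equivalently on the path $(\La_r)_{r\leq s}$, the variables $\hat X_s^1,\ldots,\hat X_s^N$ are i.i.d.\! under $\p^1$ with common conditional law $\law(\hat X_s^k|\F_s^0)$, so the classical Fournier-Guillin rate for the $\W_2$-convergence of empirical measures of i.i.d.\! samples applies $\p^0$-almost surely, provided the conditional $q$-th moments $\E^1[|\hat X_s^k|^q]$ are finite; Fubini then converts this into the unconditional requirement $\sup_{s\leq T}\E[|\hat X_s^1|^q]<\infty$, which follows from $\E[|X_0^1|^q]<\infty$ by Kunita-type $L^q$-moment estimates together with (A2). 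Inserting both Wasserstein estimates, averaging the symmetric bound over $k\in\{1,\ldots,N\}$, and applying Gronwall's inequality yields \eqref{e-3}. The bound \eqref{e-4} then follows immediately from \eqref{e-3} combined with the same triangle-inequality decomposition and Fournier-Guillin estimate, using that $(Y_t)$ and $(\hat X_t^1)$ share the same conditional law given $\F_t^0$ by the strong uniqueness of Theorem \ref{thm-2.1}.

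The main obstacle will be to justify the conditional Fournier-Guillin step rigorously on the product space $(\Omega^0\!\times\!\Omega^1,\p^0\!\otimes\!\p^1)$: one must show that $\omega^0\mapsto\law(\hat X_s^k|\F_s^0)(\omega^0)$ is a bona fide regular conditional distribution, apply the classical empirical-measure bound pathwise in $\omega^0$, and verify that the resulting $\omega^0$-dependent moment constants are $\p^0$-integrable. A secondary technical point is obtaining the uniform $L^q$ moment bound for the jump component with $q>2$; this requires combining Kunita's inequality with a Gronwall argument and is the only place where the strengthened assumption $\E[|X_0^1|^q]<\infty$ is genuinely needed.
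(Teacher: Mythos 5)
Your proposal follows essentially the same route as the paper's proof: synchronous coupling of \eqref{e-1} with \eqref{e-2}, the BDG/(A1) estimate, the triangle-inequality split through the empirical measure $\frac1N\sum_j\delta_{\hat X_s^j}$, the conditional Fournier--Guillin bound \eqref{e-8} applied under $\p^1$ for fixed $\omega^0$ (using that the $\hat X_s^k(\omega^0,\cdot)$ are i.i.d.), and then symmetry plus Gronwall. The two technical points you flag (the regular conditional distribution and the uniform $L^q$ moment needed for Fournier--Guillin) are exactly the ingredients the paper relies on, so the argument is correct and matches the paper's.
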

	
	 \begin{proof}
	 	Applying Burkholder-Davis-Gundy's inequality and (A1),
	 	\begin{equation}\label{e-6}
	 		\begin{aligned}
	 			&\E\big[\sup_{0\leq s\leq t}|X_s^{k,N}\!-\! \hat{X}_s^k|^2\big]\\
	 			&\leq 3\E\Big[ \Big|\int_0^t\!|b(X_s^{k,N},\mu_s^N,\La_s)\!-\!b(\hat{X}_s^k, \law(\hat{X}_s^k|\F_s^0),\La_s)|\d s\Big|^2\Big]\\
	 			&\quad +\!3\E\Big[\sup_{0\leq s\leq t}\!\Big(\!\int_0^s\!\!\big( \sigma(X_r^{k,N},\mu_r^N,\La_r)\!-\!\sigma(\hat{X}_r^k, \law(\hat{X}_r^k|\F_r^0),\La_r)\big)\d W_r^k\Big)^2\Big]\\
	 			&\quad +\!3\E\Big[\sup_{0\leq s\leq t}\!\Big(\!\int_0^s\!\int_{\R^d_0}\!\! \big(g(X_r^{k,N},\mu_r^N,\La_r)\!-\!g(\hat{X}_r^k,\law(\hat{X}_r^k |\F_r^0),\La_r)\big)\wt{\mathcal{N}}^k(\d r,\d z)\Big)^2\Big]\\
	 			&\leq (3TK_1+3K_1C_2)\E\Big[\int_0^t\!|X_s^{k,N}-\hat{X}_s^k|^2+
	 			\!\W_2^2(\mu_s^N,\law(\hat{X}_s^k|\F_s^0))\d s\Big].
	 		\end{aligned}
	 	\end{equation}
	 	Since $\frac1N \sum_{j=1}^N \delta_{(X_s^{j,N},\hat{X}_s^j)}$ is a coupling of $\mu_s^N$ and $\frac 1N\sum_{j=1}^N \delta_{\hat{X}_s^j}$, by the definition of $L^2$-Wasserstein distance and the triangle inequality,
	 	\begin{equation}\label{e-7}
	 		\begin{aligned}
	 			\W_2^2(\mu_s^N, \law(\hat{X}^k_s|\F_s^0))&\leq 2\W_2^2\big (\mu_s^N, \frac 1N \sum_{j=1}^N\delta_{\hat{X}_s^j}\big)+2\W_2^2\big( \frac 1N \sum_{j=1}^N\delta_{\hat{X}_s^j}, \law(\hat{X}_s^k|\F_s^0)\big)\\
	 			&\leq
	 			\frac 2N\sum_{k=1}^N\!|X_s^{k,N}-\hat{X}_s^k|^2  +2\W_2^2\big( \frac 1N \sum_{j=1}^N\delta_{\hat{X}_s^j}, \law(\hat{X}_s^k|\F_s^0)\big).
	 		\end{aligned}
	 	\end{equation}
	 	Using \cite[Theorem 1]{FG14}, there exists a constant $C$ depending only on $d$, $q$ such that
	 	\begin{equation}\label{e-8}
	 		\E^1\Big[\W_2^2\big(\frac 1N\sum_{k=1}^N\!\delta_{\hat{X}_s^k(\omega^0,\cdot)}, \law(\hat{X}_s^k|\F_s^0)(\omega^0)\big)\Big]\leq   C \epsilon_N.
	 	\end{equation}
	 	
	 	By the symmetry in the structure of the system of particles \eqref{e-1} (see \cite[Theorem 2.12]{Car} for details), it follows from \eqref{e-2}, \eqref{e-8} that
	 	\[\E\big[\sup_{0\leq s\leq t} |X_s^{1,N}-\hat{X}^1_s|^2\big]\leq C\int_0^t\!\Big(\E\big[|X_s^{1,N}-\hat{X}_s^1|^2\big]+  \epsilon_N\Big) \d s\]
	 	Then, Gronwall's inequality yields that
	 	\[\E\big[\sup_{0\leq s\leq t}|X_s^{1,N}-\hat{X}_s^1|^2\big]\leq \wt C \epsilon_N
	 	\] for some constant $\wt C$ depending only on $T,\,d,q$.
	 	Plugging this bound and \eqref{e-8} into \eqref{e-7}, we get a similar bound for
	 	\[\sup_{t\in [0,T]}\E\Big[\W_2^2\Big(\frac1N\sum_{k=1}^N\delta_{X_t^{k,N}}, \law(Y_t|\F_t^0)\Big)\Big],\]
	 	and complete the proof.
	 \end{proof}

	
	
	

\end{document}